\documentclass[10pt]{amsart}

\usepackage[T1]{fontenc}
\usepackage[utf8]{inputenc}
\usepackage[margin=1.15in]{geometry}
\usepackage{amsmath,amssymb,amsthm,mathtools}
\usepackage{array}
\usepackage[colorlinks=true,linkcolor=blue,citecolor=blue,urlcolor=blue]{hyperref}

\newtheorem{theorem}{Theorem}[section]
\newtheorem{conjecture}[theorem]{Conjecture}
\newtheorem{lemma}[theorem]{Lemma}
\newtheorem{proposition}[theorem]{Proposition}
\newtheorem{remark}[theorem]{Remark}

\newcommand{\Q}{\mathbb{Q}}
\newcommand{\Row}{\operatorname{row}}
\DeclareMathOperator{\rank}{rank}
\DeclareMathOperator{\Span}{span}
\DeclareMathOperator{\Supp}{supp}

\title[A counterexample to a sparse basis conjecture]
{A Counterexample to a Sparse Basis Conjecture of Brualdi, Friedland, and Pothen}

\author{Mohsen Aliabadi}
\address{Department of Mathematics, Clayton State University, Morrow, GA 30260, USA}
\email{maliabadi@clayton.edu}

\subjclass[2020]{Primary 15A03, 15B99; Secondary 05B35, 05C50}
\keywords{Sparse basis problem, elementary vector, row space, sparse generic matrix, rank-intersection criterion}

\begin{document}

\begin{abstract}
Brualdi, Friedland, and Pothen studied sparse bases of row spaces and proposed a rank-intersection criterion for elementary row-space vectors of sparse generic matrices. We give a \(4\times 8\) sparse generic counterexample. The example has four elementary vectors in its row space whose zero sets satisfy all the proposed rank-intersection inequalities, but the four vectors satisfy a nontrivial linear relation and hence do not form a basis. The construction also explains the obstruction: the proposed inequalities see only ranks of column sets indexed by intersections of zero sets, while linear independence of the corresponding elementary vectors is governed by the relative position of the hyperplanes annihilated by their coefficient vectors.
\end{abstract}

\maketitle

\section{Introduction}

Let \(A\) be an \(m\times n\) matrix over a field \(F\), and let \(W\subseteq F^n\) be its row space. A natural way to search for sparse bases of \(W\) is to use row-space vectors whose supports are minimal under inclusion. Such vectors are called elementary vectors. Equivalently, they are the nonzero row-space vectors whose zero sets are maximal among zero sets of nonzero vectors in \(W\).

Brualdi, Friedland, and Pothen studied this problem in connection with sparse bases and multilinear algebra \cite{BFP}. In the sparse generic setting, they proposed a rank-intersection condition intended to decide when \(m\) elementary vectors in the row space of a rank-\(m\) matrix form a basis. In zero-set language, the condition says that, for elementary vectors \(x_1,\ldots,x_m\), the ranks of the submatrices indexed by intersections of the zero sets \(Z(x_s)\) should satisfy
\[
        \rank A\left[:,\bigcap_{s\in P} Z(x_s)\right]\le m-|P|
        \qquad
        \text{for every nonempty } P\subseteq[m].
\]

The purpose of this note is to show that these inequalities are not sufficient. We construct a \(4\times 8\) sparse generic matrix \(A\) and four elementary row-space vectors
\[
        x_1,x_2,x_3,x_4\in \Row(A)
\]
whose zero sets satisfy all the required rank-intersection inequalities, but which nevertheless satisfy a nontrivial linear relation. Therefore the four elementary vectors do not form a basis of the row space.

The example is small, but the reason for the failure is structural. If \(x=y^T A\), then the zero set of \(x\) is controlled by the columns of \(A\) annihilated by \(y\). Thus each elementary vector arises from a hyperplane in the column space, and linear dependence among the elementary vectors is equivalent, under the full-row-rank hypothesis, to linear dependence among the corresponding normal vectors. The proposed rank-intersection inequalities do not see this hyperplane arrangement. In the example below, the intersection of the four index sets is empty, so the conjectured rank condition sees rank \(0\). Nevertheless, the four associated hyperplanes have a nonzero common intersection, forcing the corresponding normal vectors, and hence the elementary row-space vectors, to be dependent.

The paper is organized as follows. Section~\ref{sec:notation} recalls the notation and states the conjectural criterion in the form used here. Section~\ref{sec:mechanism} explains the linear-algebraic mechanism behind the construction. Section~\ref{sec:counterexample} gives the explicit counterexample and verifies all details. Section~\ref{sec:obstruction} identifies precisely what information is lost by the rank-intersection test.

\section{Notation and the conjectural criterion}\label{sec:notation}

Let \(F\) be a field and let \(A\) be an \(m\times n\) matrix over \(F\). We write
\[
        [n]=\{1,\ldots,n\}.
\]
For \(T\subseteq[n]\), let \(A[:,T]\) denote the submatrix of \(A\) obtained by keeping all rows and only those columns indexed by \(T\). If \(T=\varnothing\), we set
\[
        \rank A[:,\varnothing]=0.
\]

The row space of \(A\) is
\[
        W=\{y^T A:y\in F^m\}\subseteq F^n.
\]
For \(x=(x_1,\ldots,x_n)\in F^n\), define
\[
        \Supp(x)=\{j\in[n]:x_j\neq 0\},
        \qquad
        Z(x)=\{j\in[n]:x_j=0\}.
\]
Thus
\[
        Z(x)=[n]\setminus \Supp(x).
\]

A nonzero vector \(x\in W\) is called an \emph{elementary vector} of \(W\) if its support is inclusion-minimal among supports of nonzero vectors in \(W\). Equivalently, \(x\) is elementary if its zero set is inclusion-maximal among zero sets of nonzero vectors in \(W\).

A matrix over a field extension of \(\Q\), with a prescribed zero pattern, is called \emph{sparse generic} if its displayed nonzero entries are algebraically independent over \(\Q\). Equivalently, no nonzero polynomial with rational coefficients vanishes when evaluated at the displayed nonzero entries.

We use the following zero-set formulation of the sparse rank-intersection criterion proposed by Brualdi, Friedland, and Pothen.

\begin{conjecture}\label{conj:BFP}
Let \(A\) be an \(m\times n\) matrix of rank \(m\) whose displayed nonzero entries are algebraically independent over \(\Q\). Let
\[
        x_1,\ldots,x_m
\]
be elementary vectors in the row space of \(A\). For each \(s\in[m]\), put
\[
        J_s=Z(x_s).
\]
Then \(x_1,\ldots,x_m\) form a basis of the row space of \(A\) if and only if, for every nonempty subset \(P\subseteq[m]\),
\[
        \rank A\left[:,\bigcap_{s\in P}J_s\right]\le m-|P|.
\]
\end{conjecture}

We disprove the sufficiency direction of Conjecture~\ref{conj:BFP}.

\section{The mechanism behind the example}\label{sec:mechanism}

Before giving the matrix, we record the elementary mechanism that produces the counterexample. Let \(A_1,\ldots,A_n\in F^m\) be the columns of \(A\). If \(x=y^T A\), then
\[
        x_j=y^T A_j.
\]
Thus \(x\) vanishes on a set \(J\subseteq[n]\) precisely when \(y\) annihilates the column span
\[
        U(J)=\Span_F\{A_j:j\in J\}\subseteq F^m.
\]

The following lemma gives a useful way to produce elementary row-space vectors.

\begin{lemma}\label{lem:elementary}
Let \(A\) be an \(m\times n\) matrix over \(F\) with \(\rank A=m\). Let \(J\subseteq[n]\), and suppose that
\[
        U(J)=\Span_F\{A_j:j\in J\}
\]
is a hyperplane in \(F^m\). Let \(0\neq y\in U(J)^\perp\), and set
\[
        x=y^T A.
\]
Assume that
\[
        A_r\notin U(J)
        \qquad
        \text{for every } r\notin J.
\]
Then \(Z(x)=J\), and \(x\) is an elementary vector of the row space of \(A\).
\end{lemma}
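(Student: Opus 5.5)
The plan has two parts: first identify the zero set of \(x\) exactly, then show that no nonzero vector of the row space has a strictly larger zero set. Both parts rest on the identity \(x_j=y^T A_j\) recorded just before the lemma.

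\emph{Step 1: \(Z(x)=J\).} Write \(x_j=y^T A_j\) for each \(j\in[n]\).
\begin{itemize}
\item If \(j\in J\), then \(A_j\in U(J)\). Since \(y\in U(J)^\perp\), we get \(x_j=0\), so \(J\subseteq Z(x)\).
\item Now take \(r\notin J\). Because \(U(J)\) is a hyperplane and \(y\neq 0\) annihilates it, the kernel of the functional \(v\mapsto y^T v\) is exactly \(U(J)\). Indeed, that kernel is a hyperplane containing the hyperplane \(U(J)\), so the two coincide by dimension.
\item By hypothesis \(A_r\notin U(J)\), so \(x_r=y^T A_r\neq 0\). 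Hence \(Z(x)=J\).
\end{itemize}
The same observation gives \(x\neq 0\). Even when \(J=[n]\) this is fine, because the columns of \(A\) span \(F^m\) (as \(\rank A=m\)), so \(U(J)\) could not then be a hyperplane.

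\emph{Step 2: maximality of the zero set.} Suppose \(x'=z^T A\) is a nonzero row-space vector with \(Z(x')\supseteq J\).
\begin{itemize}
\item Since \(z^T A_j=0\) for all \(j\in J\), the vector \(z\) lies in \(U(J)^\perp\).
\item This space is one-dimensional, because \(U(J)\) has dimension \(m-1\) in \(F^m\).
\item Therefore \(z=cy\) for some scalar \(c\), and so \(x'=cx\).
\item Since \(x'\neq 0\), we have \(c\neq 0\), which gives \(Z(x')=Z(x)=J\).
\end{itemize}
So no nonzero row-space vector has a zero set strictly containing \(Z(x)\); that is, \(x\) is elementary. The hypothesis \(\rank A=m\) is used only to guarantee that \(y\mapsto y^T A\) is injective, which ensures \(x\neq 0\). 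The argument in this step goes through without it.

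The only point needing care is in Step 1: making sure that a nonzero annihilator of a hyperplane vanishes on no vector outside it. This is the dimension count above, so I expect no real obstacle.
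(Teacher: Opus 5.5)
Your proof is correct and follows essentially the paper's argument. Step 1 is the same. In Step 2 you use that $U(J)^\perp$ is one-dimensional, so every row-space vector vanishing on $J$ is a scalar multiple of $x$, whereas the paper picks $r\in Z(w)\setminus J$ and notes that the columns indexed by $J\cup\{r\}$ span $F^m$, forcing $z=0$; this is the same dimension count seen dually. Your version has a small advantage: it shows that elementarity of $x$ needs only that $U(J)$ is a hyperplane and $x\neq 0$, with the hypothesis $A_r\notin U(J)$ used solely to identify $Z(x)=J$. You also check $x\neq0$ explicitly, which the paper leaves implicit.
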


\begin{proof}
For each \(j\in J\), we have \(A_j\in U(J)\), so \(y^T A_j=0\). Hence \(J\subseteq Z(x)\). If \(r\notin J\), then \(A_r\notin U(J)\). Since \(U(J)\) is a hyperplane and \(0\neq y\in U(J)^\perp\), we have
\[
        U(J)=\{v\in F^m:y^Tv=0\}.
\]
Therefore \(y^T A_r\neq0\). Thus \(Z(x)=J\).

It remains to prove that \(x\) is elementary. Suppose that \(w\in\Row(A)\) is nonzero and that
\[
        Z(x)\subsetneq Z(w).
\]
Since \(Z(x)=J\), there exists \(r\notin J\) such that \(w\) vanishes on \(J\cup\{r\}\). Write \(w=z^TA\) with \(z\in F^m\). Then
\[
        z^T A_j=0
        \qquad
        (j\in J\cup\{r\}).
\]
The columns indexed by \(J\) span the hyperplane \(U(J)\), and \(A_r\notin U(J)\). Hence the columns indexed by \(J\cup\{r\}\) span all of \(F^m\). It follows that \(z=0\), and hence \(w=0\), a contradiction. Therefore no nonzero row-space vector has zero set properly containing \(J\). Thus \(x\) is elementary.
\end{proof}

The next proposition explains the hidden obstruction. If the zero sets \(J_s\) span hyperplanes \(U_s\), then the elementary vectors \(x_s\) are controlled by normal vectors to these hyperplanes.

\begin{proposition}\label{prop:hidden}
Let \(A\) be an \(m\times n\) matrix over \(F\) with \(\rank A=m\). Let
\[
        J_1,\ldots,J_t\subseteq[n],
        \qquad
        t\le m,
\]
and suppose that
\[
        U_s=\Span_F\{A_j:j\in J_s\}
\]
is a hyperplane in \(F^m\) for each \(s\). Choose a nonzero vector
\[
        y_s\in U_s^\perp
\]
and set
\[
        x_s=y_s^TA.
\]
Then \(x_1,\ldots,x_t\) are linearly independent in the row space of \(A\) if and only if, for every nonempty subset \(P\subseteq\{1,\ldots,t\}\),
\[
        \dim\bigcap_{s\in P} U_s\le m-|P|.
\]
\end{proposition}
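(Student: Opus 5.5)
The plan is to reduce everything to the normal vectors \(y_s\) and then read the dimension condition as a statement about the ranks of subsets of these vectors. The first step is to note that, because \(\rank A=m\), the map \(F^m\to\Row(A)\), \(y\mapsto y^TA\), is a linear isomorphism: it is surjective by definition of the row space, and injective because the rows of \(A\) are linearly independent. Hence any linear relation \(\sum c_s x_s=0\) is equivalent to \(\bigl(\sum c_s y_s\bigr)^TA=0\), that is, to \(\sum c_s y_s=0\). So \(x_1,\ldots,x_t\) are linearly independent if and only if \(y_1,\ldots,y_t\) are.

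The second step identifies each hyperplane with the kernel of its normal functional. Here \(U^\perp\) means the annihilator \(\{y: y^Tu=0 \text{ for all } u\in U\}\) with respect to the standard nondegenerate pairing on \(F^m\), so no orthogonality or characteristic assumptions on \(F\) are needed. Since \(U_s\) is a hyperplane, \(U_s^\perp\) is one-dimensional, spanned by \(y_s\). Consequently
\[
U_s=\{v\in F^m: y_s^Tv=0\},
\]
exactly as in the proof of Lemma~\ref{lem:elementary}. For any nonempty \(P\), the intersection \(\bigcap_{s\in P}U_s\) is then the common annihilator of \(\{y_s:s\in P\}\), which equals \(\bigl(\Span\{y_s:s\in P\}\bigr)^\perp\). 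By the standard dimension formula for annihilators,
\[
\dim\bigcap_{s\in P}U_s = m-\dim\Span\{y_s:s\in P\}.
\]
Thus the inequality in the proposition is equivalent to \(\dim\Span\{y_s:s\in P\}\ge |P|\). Since \(\dim\Span\{y_s:s\in P\}\le |P|\) always holds, this is in turn equivalent to \(\{y_s:s\in P\}\) being linearly independent.

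The final step combines the two reductions. If the \(y_s\) are independent, then every subfamily is independent, so the inequality holds for every nonempty \(P\). Conversely, taking \(P=\{1,\ldots,t\}\) in the inequality gives independence of all the \(y_s\), and hence, by the first step, of the \(x_s\).

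I do not expect a genuine obstacle. The only points needing care are using \(\perp\) as the annihilator, so that the dimension formula is valid over an arbitrary field, and invoking the full-row-rank hypothesis for injectivity of \(y\mapsto y^TA\).
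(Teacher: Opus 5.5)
Your proposal is correct and follows essentially the same route as the paper: it uses the isomorphism \(y\mapsto y^TA\) to reduce independence of the \(x_s\) to independence of the \(y_s\), then identifies \(\bigcap_{s\in P}U_s\) with the common annihilator of \(\{y_s:s\in P\}\), so the inequality becomes \(\rank\{y_s:s\in P\}\ge |P|\). Your remarks on reading \(\perp\) as an annihilator over an arbitrary field and on injectivity from full row rank only make explicit what the paper uses implicitly.
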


\begin{proof}
Since \(\rank A=m\), the map
\[
        F^m\longrightarrow \Row(A),
        \qquad
        y\longmapsto y^TA,
\]
is an isomorphism. Therefore \(x_1,\ldots,x_t\) are linearly independent if and only if \(y_1,\ldots,y_t\) are linearly independent.

For every \(s\), the hyperplane \(U_s\) is the kernel of the nonzero functional \(v\mapsto y_s^Tv\). Hence, for every nonempty \(P\subseteq\{1,\ldots,t\}\),
\[
        \bigcap_{s\in P}U_s
        =
        \{v\in F^m:y_s^Tv=0\text{ for every }s\in P\}.
\]
Therefore
\[
        \dim\bigcap_{s\in P}U_s
        =
        m-\rank\{y_s:s\in P\}.
\]
Thus
\[
        \dim\bigcap_{s\in P}U_s\le m-|P|
\]
is equivalent to
\[
        \rank\{y_s:s\in P\}\ge |P|.
\]
Since there are exactly \(|P|\) vectors in \(\{y_s:s\in P\}\), this is equivalent to their linear independence. Requiring this for every nonempty \(P\) is equivalent to the linear independence of \(y_1,\ldots,y_t\), and hence to the linear independence of \(x_1,\ldots,x_t\).
\end{proof}

Proposition~\ref{prop:hidden} shows the conceptual gap. The conjectural inequality tests
\[
        \rank A\left[:,\bigcap_{s\in P}J_s\right],
\]
which is the dimension of the span of those columns whose indices lie in every \(J_s\). But the actual independence of the elementary vectors depends on
\[
        \dim\bigcap_{s\in P}U_s,
        \qquad
        U_s=\Span\{A_j:j\in J_s\}.
\]
In general,
\[
        \Span\{A_j:j\in \bigcap_{s\in P}J_s\}
        \subseteq
        \bigcap_{s\in P}\Span\{A_j:j\in J_s\},
\]
and the inclusion can be strict. The counterexample below exploits exactly this strictness.

\section{The counterexample}\label{sec:counterexample}

Let
\[
        K=\Q(a,b,c,d,e,f,g,h,i,j,k,l)
\]
be the rational function field in twelve algebraically independent variables over \(\Q\). Consider the \(4\times 8\) matrix
\[
A=
\begin{pmatrix}
 a&c&d&0&0&0&0&k\\
 0&0&0&e&0&h&0&l\\
 b&0&0&0&0&i&j&0\\
 0&0&0&f&g&0&0&0
\end{pmatrix}
\]
over \(K\). Since the displayed nonzero entries are algebraically independent over \(\Q\), the matrix is sparse generic relative to its zero pattern.

The submatrix on columns \(\{1,2,4,5\}\) is
\[
        A[:,\{1,2,4,5\}]
        =
        \begin{pmatrix}
        a&c&0&0\\
        0&0&e&0\\
        b&0&0&0\\
        0&0&f&g
        \end{pmatrix},
\]
and
\[
        \det A[:,\{1,2,4,5\}]=bceg\neq0.
\]
Thus
\[
        \rank A=4.
\]

Define four subsets of \([8]\) by
\[
        J_1=\{5,7,8\},\qquad
        J_2=\{1,5,6\},
\]
\[
        J_3=\{1,4,6\},\qquad
        J_4=\{4,7,8\}.
\]
The proposed zero sets and their complements are
\[
\begin{array}{c|c|c}
 s & J_s & [8]\setminus J_s \\ \hline
 1 & \{5,7,8\} & \{1,2,3,4,6\}\\
 2 & \{1,5,6\} & \{2,3,4,7,8\}\\
 3 & \{1,4,6\} & \{2,3,5,7,8\}\\
 4 & \{4,7,8\} & \{1,2,3,5,6\}
\end{array}
\]

\begin{theorem}\label{thm:counterexample}
The matrix \(A\) has elementary row-space vectors
\[
        x_1,x_2,x_3,x_4
\]
such that
\[
        Z(x_s)=J_s
        \qquad
        (s=1,2,3,4).
\]
These four elementary vectors satisfy all the rank-intersection inequalities in Conjecture~\ref{conj:BFP}, but they are linearly dependent. Hence they do not form a basis of the row space of \(A\).
\end{theorem}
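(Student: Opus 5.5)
The plan is to construct the four vectors explicitly as \(x_s=y_s^TA\) using Lemma~\ref{lem:elementary}, then check the inequalities of Conjecture~\ref{conj:BFP} combinatorially, and finally show the dependence by writing down an explicit linear relation among the \(y_s\).

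\textbf{Step 1: the spans \(U_s\) and their normal vectors.} Write the columns as
\[
A_1=(a,0,b,0)^T,\quad A_2=(c,0,0,0)^T,\quad A_3=(d,0,0,0)^T,\quad A_4=(0,e,0,f)^T,
\]
\[
A_5=(0,0,0,g)^T,\quad A_6=(0,h,i,0)^T,\quad A_7=(0,0,j,0)^T,\quad A_8=(k,l,0,0)^T.
\]
For each \(s\), the three columns indexed by \(J_s\) are visibly independent, since each set contains a triangular \(3\times 3\) minor with nonzero monomial determinant. Hence \(U_s=U(J_s)\) is a hyperplane. Solving the three orthogonality equations in each case gives the normal vectors
\[
y_1=(l,-k,0,0),\qquad y_2=(bh,ai,-ah,0),
\]
\[
y_3=(bhf,aif,-ahf,-aei),\qquad y_4=(-lf,kf,0,-ke).
\]
Each should be checked to be orthogonal to the three columns in its \(J_s\).

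\textbf{Step 2: elementarity and zero sets.} To apply Lemma~\ref{lem:elementary}, I verify that \(y_s^TA_r\neq 0\) for every \(r\notin J_s\). Each such product is a nonzero polynomial in the variables, so it is nonzero by algebraic independence. For example:
\begin{itemize}
\item for \(y_1\) the products are \(la,lc,ld,-ke,-kh\);
\item for \(y_2\) they include \(bhk+ail\);
\item for \(y_3\) they include \(-aeig\) and \(bhfk+aifl\);
\item for \(y_4\) they are \(-lfa,-lfc,-lfd,-keg,kfh\).
\end{itemize}
The lemma then gives \(Z(x_s)=J_s\) and shows that each \(x_s\) is elementary.

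\textbf{Step 3: the rank-intersection inequalities.} These are a finite check on intersections.
\begin{itemize}
\item Singletons: \(\rank A[:,J_s]=3\).
\item Pairs: \(J_1\cap J_2=\{5\}\), \(J_3\cap J_4=\{4\}\), \(J_1\cap J_4=\{7,8\}\) and \(J_2\cap J_3=\{1,6\}\). The corresponding ranks are at most \(2\). The remaining pairs \(J_1\cap J_3\) and \(J_2\cap J_4\) are empty.
\item Triples and the full intersection: because of the empty pairwise intersections, every triple intersection is contained in \(J_1\cap J_3\) or \(J_2\cap J_4\), except \(J_2\cap J_3\cap J_4=\{1,6\}\cap\{4,7,8\}=\varnothing\). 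So all triple and quadruple intersections are empty and have rank \(0\).
\end{itemize}

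\textbf{Step 4: the dependence.} This is the crux, and it is where Proposition~\ref{prop:hidden} explains why the inequalities fail to detect it. Direct inspection shows
\[
y_3-fy_2=(0,0,0,-aei)
\]
and
\[
y_4+fy_1=(0,0,0,-ke).
\]
Hence
\[
k(y_3-fy_2)-ai(y_4+fy_1)=0,
\]
that is,
\[
-aif\,y_1-kf\,y_2+k\,y_3-ai\,y_4=0,
\]
which is a nontrivial relation. Applying the map \(y\mapsto y^TA\) gives the same relation among \(x_1,x_2,x_3,x_4\), so they are dependent and cannot form a basis.

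Equivalently, \(U_1\cap U_4=\{v:lv_1=kv_2,\ v_4=0\}\) is \(2\)-dimensional, and a nonzero vector in it lies in all four hyperplanes. The main risk is purely computational: an arithmetic slip in the \(y_s\). I would double-check Step 1 by re-verifying each orthogonality before trusting the relation in Step 4.
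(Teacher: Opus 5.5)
Your proof is correct and takes essentially the same route as the paper. Your \(y_s\) are the paper's normal vectors rescaled by \(-k,-ah,-aei,-ke\), elementarity comes from Lemma~\ref{lem:elementary} via the nonvanishing of \(y_s^TA_r\) for \(r\notin J_s\), and your relation \(-aif\,y_1-kf\,y_2+k\,y_3-ai\,y_4=0\) is the paper's relation \(\frac fe y_1+\frac{fh}{ei}y_2-y_3+y_4=0\), rewritten for the rescaled vectors and multiplied by \(aeik\). One cosmetic slip: Step 3 needs no exception, since every \(3\)-subset of \(\{1,2,3,4\}\) contains \(\{1,3\}\) or \(\{2,4\}\), so \(J_2\cap J_3\cap J_4\subseteq J_2\cap J_4=\varnothing\) as well.
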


\begin{proof}
For each \(s\in\{1,2,3,4\}\), define
\[
        U_s=\Span_K\{A_j:j\in J_s\}\subseteq K^4.
\]
We first verify that each \(U_s\) is a hyperplane.

For \(J_1=\{5,7,8\}\),
\[
        A[:,J_1]=
        \begin{pmatrix}
        0&0&k\\
        0&0&l\\
        0&j&0\\
        g&0&0
        \end{pmatrix}.
\]
The minor on rows \(\{1,3,4\}\) has determinant \(-gjk\neq0\). Hence \(\rank A[:,J_1]=3\).

For \(J_2=\{1,5,6\}\),
\[
        A[:,J_2]=
        \begin{pmatrix}
        a&0&0\\
        0&0&h\\
        b&0&i\\
        0&g&0
        \end{pmatrix}.
\]
The minor on rows \(\{1,2,4\}\) has determinant \(-agh\neq0\). Hence \(\rank A[:,J_2]=3\).

For \(J_3=\{1,4,6\}\),
\[
        A[:,J_3]=
        \begin{pmatrix}
        a&0&0\\
        0&e&h\\
        b&0&i\\
        0&f&0
        \end{pmatrix}.
\]
The minor on rows \(\{1,2,3\}\) has determinant \(aei\neq0\). Hence \(\rank A[:,J_3]=3\).

For \(J_4=\{4,7,8\}\),
\[
        A[:,J_4]=
        \begin{pmatrix}
        0&0&k\\
        e&0&l\\
        0&j&0\\
        f&0&0
        \end{pmatrix}.
\]
The minor on rows \(\{1,2,3\}\) has determinant \(ejk\neq0\). Hence \(\rank A[:,J_4]=3\).

Thus each \(U_s\) is a hyperplane in \(K^4\). Choose normal vectors
\[
        y_1=
        \begin{pmatrix}
        -l/k\\
        1\\
        0\\
        0
        \end{pmatrix},
        \qquad
        y_2=
        \begin{pmatrix}
        -b/a\\
        -i/h\\
        1\\
        0
        \end{pmatrix},
\]
\[
        y_3=
        \begin{pmatrix}
        -bfh/(aei)\\
        -f/e\\
        fh/(ei)\\
        1
        \end{pmatrix},
        \qquad
        y_4=
        \begin{pmatrix}
        fl/(ek)\\
        -f/e\\
        0\\
        1
        \end{pmatrix}.
\]
A direct computation gives
\[
        y_s^T A[:,J_s]=0
        \qquad
        (s=1,2,3,4).
\]
Since \(U_s\) is a hyperplane, \(y_s\) spans the one-dimensional space \(U_s^\perp\).

Set
\[
        x_s=y_s^TA
        \qquad
        (s=1,2,3,4).
\]
Then
\[
        x_1=
        \left(
        -al/k,
        -cl/k,
        -dl/k,
        e,
        0,
        h,
        0,
        0
        \right),
\]
\[
        x_2=
        \left(
        0,
        -bc/a,
        -bd/a,
        -ei/h,
        0,
        0,
        j,
        -(ail+bhk)/(ah)
        \right),
\]
\[
        x_3=
        \left(
        0,
        -bcfh/(aei),
        -bdfh/(aei),
        0,
        g,
        0,
        fhj/(ei),
        -f(ail+bhk)/(aei)
        \right),
\]
and
\[
        x_4=
        \left(
        afl/(ek),
        cfl/(ek),
        dfl/(ek),
        0,
        g,
        -fh/e,
        0,
        0
        \right).
\]
All displayed monomial factors are nonzero in \(K\). Moreover,
\[
        ail+bhk\neq0
\]
because \(ail+bhk\) is a nonzero polynomial in algebraically independent variables. Therefore
\[
        Z(x_1)=\{5,7,8\}=J_1,
\]
\[
        Z(x_2)=\{1,5,6\}=J_2,
\]
\[
        Z(x_3)=\{1,4,6\}=J_3,
\]
and
\[
        Z(x_4)=\{4,7,8\}=J_4.
\]

We next show that each \(x_s\) is elementary. Fix \(s\). Since \(U_s\) is a hyperplane and \(y_s\) spans \(U_s^\perp\), a column \(A_r\) lies outside \(U_s\) precisely when
\[
        y_s^T A_r\neq0.
\]
For every \(r\notin J_s\), the \(r\)-th coordinate of \(x_s=y_s^TA\) is nonzero, because \(Z(x_s)=J_s\). Hence
\[
        A_r\notin U_s
        \qquad
        (r\notin J_s).
\]
By Lemma~\ref{lem:elementary}, each \(x_s\) is elementary.

It remains to verify the rank-intersection inequalities. For singletons,
\[
        \rank A[:,J_s]=3=4-1
        \qquad
        (s=1,2,3,4).
\]
For two-element subsets, the intersections and ranks are
\[
\begin{array}{c|c|c}
P & \bigcap_{s\in P}J_s &
\rank A\left[:,\bigcap_{s\in P}J_s\right] \\ \hline
\{1,2\} & \{5\} & 1\\
\{1,3\} & \varnothing & 0\\
\{1,4\} & \{7,8\} & 2\\
\{2,3\} & \{1,6\} & 2\\
\{2,4\} & \varnothing & 0\\
\{3,4\} & \{4\} & 1
\end{array}
\]
and each rank is at most \(4-2=2\). If \(|P|\ge3\), then
\[
        \bigcap_{s\in P}J_s=\varnothing,
\]
so
\[
        \rank A\left[:,\bigcap_{s\in P}J_s\right]=0\le 4-|P|.
\]
Thus all rank-intersection inequalities in Conjecture~\ref{conj:BFP} are satisfied.

Finally, the coefficient vectors satisfy
\[
        \frac{f}{e}y_1+\frac{fh}{ei}y_2-y_3+y_4=0.
\]
Multiplying by \(A\), and using \(x_s=y_s^TA\), gives
\[
        \frac{f}{e}x_1+\frac{fh}{ei}x_2-x_3+x_4=0.
\]
This is a nontrivial linear dependence because all four coefficients
\[
        \frac fe,
        \qquad
        \frac{fh}{ei},
        \qquad
        -1,
        \qquad
        1
\]
are nonzero in \(K\). Since \(\rank A=4\), the row space of \(A\) has dimension \(4\). Therefore the four elementary vectors \(x_1,x_2,x_3,x_4\) do not form a basis of the row space.
\end{proof}

\section{Why the rank-intersection test misses the obstruction}\label{sec:obstruction}

We now make explicit why the construction works. The conjectural inequalities test the ranks of
\[
        A\left[:,\bigcap_{s\in P}J_s\right],
\]
that is, they test the span of those columns whose labels lie in every \(J_s\) with \(s\in P\). But the elementary vector \(x_s=y_s^TA\) is determined by the hyperplane
\[
        U_s=\Span_K\{A_j:j\in J_s\}.
\]
Thus the actual dependence question is governed not by the column labels in \(\bigcap J_s\), but by the geometric intersection of the hyperplanes \(U_s\).

In the present example, the four zero sets have empty total intersection:
\[
        J_1\cap J_2\cap J_3\cap J_4=\varnothing.
\]
Therefore the conjectural test gives
\[
        \rank A[:,J_1\cap J_2\cap J_3\cap J_4]
        =
        \rank A[:,\varnothing]
        =
        0.
\]
This satisfies the required inequality for \(P=\{1,2,3,4\}\).

However, the associated hyperplanes have nontrivial geometric intersection. Indeed, the vector
\[
        v_0=
        \begin{pmatrix}
        ahk\\
        ahl\\
        ail+bhk\\
        0
        \end{pmatrix}
\]
satisfies
\[
        y_s^T v_0=0
        \qquad
        (s=1,2,3,4).
\]
Also \(v_0\neq0\), since its first coordinate is \(ahk\neq0\). Therefore
\[
        0\neq v_0\in U_1\cap U_2\cap U_3\cap U_4.
\]
Hence
\[
        \dim(U_1\cap U_2\cap U_3\cap U_4)\ge 1.
\]
By Proposition~\ref{prop:hidden}, this is precisely the obstruction to the linear independence of \(x_1,x_2,x_3,x_4\). The rank-intersection condition sees only that no column label belongs to all four zero sets; it does not see that the four hyperplanes spanned by those zero sets have a common vector.

This explains why the counterexample is not an accidental cancellation. The matrix is full row rank and sparse generic. The failure occurs because
\[
        \Span_K\{A_j:j\in J_1\cap J_2\cap J_3\cap J_4\}
        \subsetneq
        U_1\cap U_2\cap U_3\cap U_4.
\]
The left-hand side is zero, while the right-hand side is nonzero. Thus the proposed rank-intersection inequalities record too little geometric information to force the elementary vectors to form a basis.

\begin{remark}
The same construction may be realized over the real numbers by specializing the variables \(a,b,c,d,e,f,g,h,i,j,k,l\) to algebraically independent real numbers. Thus the obstruction is not tied to working in a rational function field; the rational function field merely keeps the nonvanishing checks symbolic.
\end{remark}

\section*{Acknowledgement}

The author is deeply grateful to his former Ph.D. advisor, Professor Shmuel Friedland, for introducing him to the conjecture during the author's Ph.D. studies and for many helpful conversations.

\end{document}